\def\R{{\Bbb R}}
\def\ifl{\iffalse }
\def\bb{\mathbb}
\def\I{\vert}
\newtheorem{prop}{Proposition}[section]
\newtheorem{theo}{Theorem}[section]
\newtheorem{lem}{Lemma}[section]
\begin{document}

\baselineskip=1.2\baselineskip
\title[  Denjoy Domains and BMOA]
{ Denjoy Domains and BMOA}

\author {Shengjin Huo}
\address{School of Mathematical Sciences, Tiangong University, Tianjin 300387, China} \email{huoshengjin@tiangong.edu.cn}
\author{Michel Zinsmeister }
\address{Institut Denis Poisson, UFR Sciences et Techniques, Universit\'{e} d'Orl\'{e}ans
BP 6749 45067 Orl\'{e}ans Cedex 2, France, zins@univ-orleans.fr}



\thanks{ This work was supported by the National Natural Science Foundation of China (Grant No.11401432).}

\subjclass[2010]{30F35, 30C62}
\keywords{Denjoy domain, Carleson measure, Carleson homogeneous.}

\begin{abstract}
 A Denjoy domain is a plane domain whose complement is a closed subset $E$ of the extended real line $\bar{R}$ containing $\infty$ : such a domain is called Carleson-homogeneous if there exists $C>0$ such that for all $z\in E$ and $r>0$, one has $\vert E\cap [z-r,z+r]\vert\geq Cr$, where $\vert\cdot\vert$ is the Lebesgue measure on the line. We prove that if $U=\bar{ \mathbb C}\backslash K$ is a Carleson-homogeneous Denjoy domain then, if $f$ stands for one of its universal coverings, $\log {f'}\in BMOA.$ In order to prove this result, we develop ideas from
 [On Carleson measures induced by Beltrami coefficients being compatible with Fuchsian groups, Ann. Fenn. Math. 46(2021),67-77] leading to a general theorem about planar domains giving sufficient conditions ensuring that $\log {f'}\in BMOA$ for any universal covering $f.$
\end{abstract}

\maketitle

\section{Background and notations}
In this paper $\bb C$ will denote the complex plane and $\bar{\bb C}$ the Riemann sphere. If $z\in \bb C$ and $r>0, \,D(z,r)=\{\zeta\in \bb C:\,\I\zeta-z\I<r\}$. The unit disk, $D(0,1)$ will be denoted by $\bb D$ and $\mathbb{D}^{*}=\bb C\backslash\bar{\bb D}$. If $E\subset \bb C$, diam$(E)$ stands for the Euclidian diameter
$$\mathrm{diam}(E)=d_1(E)=\sup\limits_{z,\zeta\in E}|z-\zeta|.$$
We generalize this definition to $n>1$ by
$$d_n(E)=\sup\limits_{z_1,..,z_n\in E}\left(\prod\limits_{j\neq k}|z_j-z_k|\right)^\frac{2}{n(n-1)},$$
a quantity which happens to be nonincreasing in $n$. We define then
cap$(E)$ as $\lim\limits_{n\to\infty}d_n(E)$. It is the transfinite diameter of $E$ which happens to be equal to the logarithmic capacity that is usually defined by potential theory techniques that we will not need, see \cite{Ra}.

A Jordan curve $\Gamma: I\to \bb C$ in the plane is said to be a quasicircle if there exists $C>0$ such that given two points $z,\zeta\in \Gamma(I)$ then $$\min\{\mathrm{diam}(\gamma_1),\mathrm{diam}(\gamma_2)\}\leq C|z-\zeta|,$$ where $\gamma_1,\gamma_2$ are the two subarcs of $\Gamma$ defined by $z,\zeta.$

A rectifiable curve $\gamma:\,I\to \bb C$ is said to be Ahlfors-David regular if there exists $C>0$ such that for every $z\in \gamma(I)$ and every $0<r<$diam$(\gamma(I))$, $$\text{length}(D(z,r)\cap\gamma(I))\leq Cr,$$ where if $F\subset \gamma(I)$ is, say, a Borel set, $$\text{length}~(F)=\int_{\gamma^{-1}(F)}|\gamma'(s)|ds.$$

Finally a Jordan curve $\Gamma: I\to \bb C$ is said to be a chord-arc curve if it is at the same time Ahlfors-David regular and a quasidisk. Equivalently $\Gamma$ is chord-arc if given two points $z,\zeta\in \Gamma(I),$ then $$\min\{\mathrm{length}(\gamma_1),\mathrm{length}(\gamma_2)\}\leq C|z-\zeta|,$$ where $\gamma_1,\gamma_2$ are the two subarcs of $\Gamma$ defined by $z,\zeta$.

We end this introductory section by recalling some facts from Riemann surfaces. A Riemann surface $\mathcal{R}$ is a one-dimensional complex manifold. The universal cover $\mathcal{R}^*$ of $\mathcal{R}$ is conformally equivalent to the Riemann sphere, the complex plane or the unit disk, see \cite{Ah}. In the latter case we say that the surface is hyperbolic, and the surface is then conformally equivalent to $\bb D/G,$ where $G$ is a Fuchsian group, that is a discrete group of automorphisms of the disk. We moreover call universal covering any automorphism from $\bb D$ onto $\mathcal{R}^*$.

In this paper we will be mainly interested in planar Riemann surfaces, which are nothing else but the subdomains of the Riemann sphere. Such a surface is hyperbolic if and only if the complement of $\Omega$ in the sphere contains more than $ 3$ points. In the case of a planar Riemann surface, a universal covering (or more precisely its projection on the Riemann sphere) is then a holomorphic function $F$ from $\mathbb D$ onto $\Omega$, locally injective and such that $F\circ \gamma=F,\, \gamma\in G$, a Fuchsian group such that $\Omega$ is conformally equivalent to $\bb D/G$. Notice that in the case $\Omega$ is simply-connected this map is a Riemann map, that is a bi-holomorphism. The universal covering may thus be seen as a Riemann map for general planar domains.

\section{Main result and motivation}
 A Denjoy domain is a plane domain whose complement is a closed subset $E$ of the extended real line $\bar{R}$ containing $\infty$. These domains have been intensively studied, the reason being that they are in some sense the simplest infinitely connected domains in the plane. For example, Rubel and Ryff (\cite{RR})have shown how to construct quasi-explicitely the uniformizing Fuchsian group of such a domain. Also (see \cite{C}, \cite{GJ}, \cite{JM}) these domains were the first infinitely connected domains for which the corona property was proven to hold.

We recall that a plane domain $U$ has the corona property if for any $n>0$ and for any $n$-tuple $(f_1,..,f_n)$ of holomorphic functions in $U$ such that there exists $\delta>0$ with
$$ \delta\leq \inf\limits_{z\in U}(\max\limits_{j=1,..,n}\I f_j(z)\I)\leq\sup\limits_{z\in U}(\max\limits_{j=1,..,n}\I f_j(z)\I)\leq 1,$$
there exist $n$ bounded holomorphic functions $g_1,..g_n$ in $U$ such that $$\forall z\in U,\,f_1(z)g_1(z)+..+f_n(z)g_n(z)=1.$$ In \cite{GJ}, Garnett and Jones  eventually proved that all Denjoy domains possess the corona property, but Carleson $($\cite{C}$)$ has been the first to raise the question and to solve it in the special case of what is now called Carleson-homogeneous Denjoy domains, a notion that will happen to be central in the present paper.

A Denjoy domain $\bar{\bb C}\backslash E$ is said to be Carleson-homogeneous if, writing $|\cdot|$ for the Lebesque measure on $\mathbb{R}$, there is a constant $C$ such that if for all $x\in E$ and $t>0$,
$$|(x-t,x+t)\cap E|\geq Ct.$$
The idea in Carleson approach is that the thickness of the boundary implies that a Carleson-homogeneous Denjoy domain "behaves" like a simply-connected one, for which Carleson  \cite{Ca} had previously proven the corona property. Here the "thickness" of the boundary is measured by the Lebesgue measure. There are other ways to measure this thickness, as for example by logarithmic capacity: a plane domain $U$ is said to be uniformly perfect if there exists $C>0$ such that $\text{for all}\, z\in \partial U$ and $0<t<\mathrm{diam}(U),$
$$ \mathrm{cap}(D(z,t)\cap E)\geq Ct,$$
where $D(z,t)$ stands for the disk of center $z$ and radius $t$ and cap is logarithmic capacity. If $V\neq \bb C$ is any simply-connected plane domain a famous theorem of Koebe asserts that, $f:\bb D\to  V$ being a Riemann map, $\log f'\in \mathcal{B}$, the Bloch class defined as
$$\mathcal{B}=\{b\, \mathrm{holomorphic\,in}\,\bb D:\,\sup\limits_{z\in\bb D}(1-\I z\I)\I b'(z)\I<+\infty\}.$$

 Pommerenke (\cite{P2}) has proven that if $f$ is the universal covering of a hyperbolic plane domain $V,$ then $\log f'\in \mathcal{B}$ if and only if $V$ is uniformly perfect, so that we may say that uniformly perfect domains "behave" with this respect like simply-connected ones.

There is yet another instance of this comparison between simply and multiply-connected domains, namely the Hayman-Wu theorem:
\begin{theo} There exists a (universal) constant $C>0$ such that for any hyperbolic simply-connected planar domain $V$, for any holomorphic bijection $f$ from $\bb D$ onto $V$ and for any line $L$, length$(f^{-1}(L\cap V))\leq C$.
\end{theo}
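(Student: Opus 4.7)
The plan is to reduce the bound to a decomposition and a per-crosscut estimate. Let $\{\gamma_n\}_n$ denote the connected components of $L\cap V$; each $\gamma_n$ is an open line segment whose two endpoints lie on $\partial V$, so $\gamma_n$ is a crosscut of $V$. Consequently $f^{-1}(\gamma_n)$ is a Jordan arc in $\mathbb{D}$ whose two prime-end endpoints $a_n,b_n$ on $\partial\mathbb{D}$ are well defined, and I let $I_n$ denote the shorter of the two arcs of $\partial\mathbb{D}$ determined by $a_n$ and $b_n$. The theorem will follow from
\begin{enumerate}
\item[(a)] the summation bound $\sum_n |I_n| \leq 2\pi$, and
\item[(b)] the per-crosscut bound $\mathrm{length}(f^{-1}(\gamma_n)) \leq C\,|I_n|$ with universal $C$.
\end{enumerate}

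For (a), since the crosscuts $\gamma_n$ are pairwise disjoint in $V$, their preimages $f^{-1}(\gamma_n)$ are pairwise disjoint crosscuts of $\mathbb{D}$, and hence the endpoint pairs $\{a_n,b_n\}$ are non-linking on $\partial\mathbb{D}$. Selecting on each pair the shorter arc yields a family of arcs $I_n$ whose interiors can be organised so as to be pairwise disjoint (up to a null set), at which point $\sum_n |I_n|\leq 2\pi$ is immediate.

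Step (b) is the geometric core. I would write
\[\mathrm{length}(f^{-1}(\gamma_n))=\int_{\gamma_n}\frac{|dw|}{|f'(f^{-1}(w))|}\]
and invoke Koebe's distortion theorem in the form $|f'(z)|\asymp d(f(z),\partial V)/(1-|z|^2)$, reducing the problem to an estimate of
\[\int_{\gamma_n}\frac{1-|f^{-1}(w)|^2}{d(w,\partial V)}\,|dw|.\]
Here the straightness of $\gamma_n$ is essential: since $\gamma_n$ is a chord whose endpoints lie on $\partial V$, for $w\in\gamma_n$ at arclength $s$ from the nearer endpoint we have $d(w,\partial V)\geq s$, which tames the singularity of the denominator. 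The numerator $1-|f^{-1}(w)|^2$ decays as $w$ approaches the endpoints of $\gamma_n$, at a rate that is controlled by the conformal geometry of $V$ near those endpoints; coupling this decay with the Beurling projection theorem (or equivalently a harmonic-measure comparison identifying the mass on the prime-end side of $\gamma_n$ with $|I_n|/(2\pi)$) produces the desired bound with an absolute constant.

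Combining (a) and (b),
\[\mathrm{length}(f^{-1}(L\cap V))=\sum_n\mathrm{length}(f^{-1}(\gamma_n))\leq C\sum_n|I_n|\leq 2\pi C,\]
which is the claimed universal bound. The main obstacle I foresee is step (b): the straightness of $L$ must genuinely be used (no estimate of this form can hold for arbitrary curves), and correctly balancing the singularity $1/d(w,\partial V)$ against the conformal decay of $1-|f^{-1}(w)|^2$ near the endpoints of each chord is the delicate point where a Koebe-plus-Beurling (or harmonic-measure) argument is unavoidable.
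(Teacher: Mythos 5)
First, some context: the paper does not prove this statement at all. It is the classical Hayman--Wu theorem, quoted (as Theorem 2.1) purely as motivation for the Denjoy analogue of Fern\'andez--Hamilton, so your argument has to stand on its own; as written it does not, and the gap is not cosmetic. The decisive problem is step (a). From disjointness of the crosscuts $f^{-1}(\gamma_n)$ you infer that the endpoint pairs are non-linking and that the shorter arcs $I_n$ ``can be organised so as to be pairwise disjoint''. Non-linking only forbids crossing; it allows nesting, and nesting genuinely occurs whenever one component of $L\cap V$ separates another from $f(0)$. Take the zigzag comb $V=\{|\,\mathrm{Im}\,z|<1\}$ minus the slits $\{2k\}\times[-1/2,1)$ and $\{2k+1\}\times(-1,1/2]$, with $L=\mathbb{R}$: the components are $\gamma_k=(k,k+1)$, and every path from a base point in the first cell to $\gamma_{k+1}$ must cross $\gamma_k$, so the preimage crosscuts are nested and the arcs satisfy $I_1\supset I_2\supset\cdots$; they are not disjoint in any organisation, $\sum_n|I_n|$ counts multiplicities, and the inequality $\sum_n|I_n|\leq 2\pi$ is simply not available from a disjointness count. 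Controlling exactly this shadowing multiplicity (equivalently, bounding the expected number of components of $L\cap V$ that separate the exit prime end from $f(0)$) is the real content of the theorem; the known proofs (Hayman--Wu, Garnett--Gehring--Jones, {\O}yma, Fern\'andez--Heinonen--Martio) handle it by stopping-time/Carleson-measure arguments or finer harmonic-measure bookkeeping, not by summing per-component harmonic measures. You give no argument that your sum is uniformly bounded, and it is not obvious that it is.

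Step (b) is also only a heuristic. You assert $\mathrm{length}(f^{-1}(\gamma_n))\leq C|I_n|$ and then describe in one sentence the kind of estimate (Koebe distortion plus Beurling projection) that might yield it, while acknowledging yourself that balancing the decay of $1-|f^{-1}(w)|^2$ against the singularity $1/d(w,\partial V)$ near the chord's endpoints is the delicate point; no proof is supplied, and it is not clear that this per-crosscut bound even holds with $I_n$ the shorter arc when the component is heavily shadowed. So both pillars of your reduction are unestablished, and the one you treat as immediate rests on an incorrect disjointness claim; the proposal is a plausible-looking scheme but not a proof, and repairing it would essentially force you through the harder arguments of the literature (where, incidentally, the sharp constant is known to lie between $\pi^2$ and $4\pi$, another hint that nothing as soft as a $2\pi$ disjointness count can suffice).
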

By analogy we will say that a planar hyperbolic domain $V$ has Hayman-Wu property if there exists a constant $C>0$ such that, $f$ being any universal covering of $V$, length$(f^{-1}(L\cap V))\leq C$ for all lines $L$. Fernandez and Hamilton $($ \cite{FH}$)$ proved the following:
\begin{theo} Let $U$ be a Denjoy domain; then $U$ has Hayman-Wu property if and only if it is Carleson-homogeneous.
\end{theo}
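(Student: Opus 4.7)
The plan exploits the reflection symmetry of $U=\bar{\bb C}\setminus E$ under complex conjugation $z\mapsto\bar z$. Write $\bb R\setminus E=\bigsqcup_k I_k$ with $I_k=(a_k,b_k)$. The involution on $U$ lifts to an antiholomorphic involution $\tau$ on $\bb D$ normalizing the uniformizing Fuchsian group $G$, and after choosing $f$ appropriately we may arrange $\tau(z)=\bar z$, so that $f(\bar z)=\overline{f(z)}$. Then $f^{-1}(\bigcup_k I_k)$ is a disjoint union of hyperbolic geodesic arcs in $\bb D$ (each being the fixed-point set of some $\gamma\tau$ with $\gamma\in G$), and these arcs partition $\bb D$ into connected components, each of which is mapped biholomorphically by $f$ onto $\bb H$ or onto $-\bb H$; call these components $\{\Omega_n^+\}_n$ and $\{\Omega_n^-\}_n$ respectively.

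For the direction \emph{Carleson-homogeneous $\Rightarrow$ Hayman-Wu}, I begin with $L=\bb R$. Here $L\cap U=\bigcup_k I_k$, so $\mathrm{length}(f^{-1}(L\cap U))$ is the total Euclidean length of the geodesic lifts. Since the Euclidean length of the hyperbolic geodesic in $\bb D$ between boundary points $\zeta_1,\zeta_2$ is comparable to the chord $|\zeta_1-\zeta_2|$, the problem becomes bounding a sum of such chords. I would apply the Koebe distortion theorem componentwise to the $\Omega_n^\pm$ and use the lower bound $|E\cap(x-r,x+r)|\geq Cr$ supplied by Carleson-homogeneity to dominate the sum by a convergent geometric series. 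For a general line $L$, decompose $L\cap U=(L\cap\bb H)\sqcup(L\cap(-\bb H))\sqcup(L\cap\bigcup_k I_k)$; the last piece is treated as above, and for $L\cap\bb H$ write $f^{-1}(L\cap\bb H)=\bigsqcup_n g_n(L\cap\bb H)$ with $g_n=(f|_{\Omega_n^+})^{-1}:\bb H\to\Omega_n^+\subset\bb D$. The Schwarz--Pick inequality gives $|g_n'(w)|\leq (1-|g_n(w)|^2)/(2\,\mathrm{Im}\,w)$, so summing in $n$ reduces the total length to an integral of the Poincar\'e-type series $\sum_n (1-|g_n(w)|^2)$ along $L$, whose pointwise finiteness and integrability both follow from Carleson-homogeneity (which forces convergence type for the Fuchsian group $G$).

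For the converse \emph{Hayman-Wu $\Rightarrow$ Carleson-homogeneous}, apply the hypothesis to $L=\bb R$ to obtain $\sum_\alpha\mathrm{length}(\alpha)\leq C$ over the geodesic lifts $\alpha$. Given $x\in E$ and $r>0$, choose a component $\Omega_n^+$ such that $f$ sends a suitable hyperbolic disk inside $\Omega_n^+$ onto a Euclidean disk in $\bb H$ of radius comparable to $r$ centered at $x+ir$. By Koebe distortion, the lift within $\Omega_n^+$ of every interval $I_k\subset(x-r,x+r)$ then has Euclidean length in $\bb D$ at least $c\,|I_k|/r$. Summing over such $k$ and using the Hayman-Wu bound yields $\sum_{I_k\subset(x-r,x+r)}|I_k|\leq C'r$, which is equivalent to $|E\cap(x-r,x+r)|\geq(2-C')r$, i.e., Carleson-homogeneity.

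The principal obstacle is the quantitative comparison between the Euclidean length of a hyperbolic-geodesic lift of $I_k$ in $\bb D$ and the Lebesgue measure $|I_k|$. Koebe distortion within each simply connected component $\Omega_n^\pm$ supplies the correct scale-dependent bound, but patching these componentwise estimates into a global statement on $\bb D$, and establishing convergence of the Poincar\'e-type series that arise in the general-$L$ analysis, is precisely where Carleson-homogeneity is indispensable.
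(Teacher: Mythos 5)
First, note that the paper does not prove this statement at all: it is quoted as Theorem 2.2 from Fernandez--Hamilton \cite{FH}, so your sketch can only be judged on its own merits, and as it stands it has genuine gaps in both directions.

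In the direction ``Carleson-homogeneous $\Rightarrow$ Hayman-Wu'', your reduction (symmetric covering $f(\bar z)=\overline{f(z)}$, geodesic lifts of the intervals $I_k$, components mapping biholomorphically onto the upper or lower half-plane, Schwarz--Pick giving $\mathrm{length}(f^{-1}(L\cap \bb H))\le \int_L \bigl(\sum_n(1-|g_n(w)|^2)\bigr)(2\,\mathrm{Im}\,w)^{-1}|dw|$) is sound, but the two steps where the theorem actually lives are asserted rather than proved. For $L=\bb R$, ``dominate the sum of chords by a convergent geometric series'' is exactly the statement that the covering group is of (strong) finite length type, i.e.\ it is the whole content of the sufficiency; no mechanism is given for how the measure-theoretic lower bound $|E\cap(x-r,x+r)|\ge Cr$ produces geometric decay of the chord sum. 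For general $L$, your claim that finiteness and integrability of the Poincar\'e-type series $\sum_n(1-|g_n(w)|^2)$ ``follow from Carleson-homogeneity (which forces convergence type)'' cannot be right as a mechanism: the paper itself notes that the complement of the triadic Cantor set is uniformly perfect, hence of convergence type, yet is \emph{not} of finite length type. Convergence type controls $\sum_g(1-|g(0)|)$ at one point; what is needed is a uniform bound on $\int_{L\cap U}\lambda_U(w)\sum_{z\in f^{-1}(w)}(1-|z|^2)\,|dw|$, where the hyperbolic density $\lambda_U$ blows up as $w$ approaches $E$ and where lines at small height or shallow angle to $\bb R$ are the critical case. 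This requires Green's function/harmonic measure estimates for homogeneous Denjoy domains (Benedicks/Carleson-type bounds), which is the heart of Fernandez--Hamilton's proof and is absent from the sketch.

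The converse direction is quantitatively broken. From Hayman-Wu with constant $C$ and the lower bound $\mathrm{length}(\text{lift of }I_k\text{ in }\Omega_n^+)\ge c\,|I_k|/r$ you deduce $\sum_{I_k\subset(x-r,x+r)}|I_k|\le C'r$ with $C'=C/c$, and conclude homogeneity with constant $2-C'$; this is vacuous unless $C'<2$, and nothing in the argument makes $C'$ small ($C$ is whatever constant the hypothesis provides, and $c<1$ necessarily). Worse, this single-component estimate can never yield the contradiction you need: the lifts you use are boundary arcs of one component $\Omega_n^+$, their lengths are bounded below only through harmonic measure seen from one point, and total harmonic measure is $1$, so your lower bound on $\mathrm{length}(f^{-1}(\bb R\cap U))$ is at most an absolute constant no matter how badly homogeneity fails. (Also, Koebe distortion is an interior estimate and does not by itself control these boundary arcs; one needs Carleman/Beurling-type harmonic measure comparisons, though that is fixable.) The missing idea is an aggregation argument: one must show that failure of homogeneity with constant $\epsilon$ forces the length of the preimage of a suitable line, for a suitable normalization of the covering, to grow without bound as $\epsilon\to 0$, by accumulating contributions from many sheets or many scales rather than from a single fundamental domain. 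That accumulation is precisely the substance of the necessity proof in \cite{FH}, and the sketch does not contain it.
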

Since if $f$ is a universal covering and $T$ is an automorphism of the unit disk then $f\circ T$ is another universal covering, we see that for a line $L$, not only $f^{-1}(L)$ has finite length but also there exists a constant $C>0$ such that length$(T(f^{-1}(L))\leq C$, meanning that arclength on $f^{-1}(L)$ is a Carleson measure (see below the definition and other instances of this property).

In the spirit of what preceeds, here is the main result of the paper:
\begin{theo} Let $U$ be a  Carleson-homogeneous Denjoy domain and $f$ one of its universal coverings, then $\log f'\in BMOA(\bb D)$.
\end{theo}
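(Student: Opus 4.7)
The task reduces to showing that the measure $d\mu(z):=(1-|z|^2)|f''(z)/f'(z)|^2\,dA(z)$ is a Carleson measure on $\mathbb{D}$, which is the standard characterization of $\log f'\in BMOA(\mathbb{D})$. Carleson-homogeneity of $E$ forces $U$ to be uniformly perfect (via $\mathrm{cap}(A)\geq|A|/4$ for linear sets $A$), so Pommerenke's theorem already gives $\log f'\in\mathcal B$; equivalently, the integrand of $d\mu$ is bounded by $C(1-|z|^2)^{-1}$. The real issue is to promote this pointwise Bloch bound to the Carleson measure property.

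The natural framework exploits the reflection symmetry of $U$ across $\mathbb{R}$. Set $\Sigma:=f^{-1}(\mathbb{R}\setminus E)$; this is a $G$-invariant disjoint union of real-analytic arcs in $\mathbb{D}$. Since the upper and lower half-planes are simply connected and $f$ is a local biholomorphism, every component $\Omega_\alpha$ of $\mathbb{D}\setminus\Sigma$ is mapped biholomorphically by $f$ onto one of these two half-planes. Hence on each $\Omega_\alpha$, $f$ is a Riemann map between two simply-connected domains, which places us locally in the classical chord-arc setting; modulo universal constants, the pre-Schwarzian $\log(f|_{\Omega_\alpha})'$ is controlled by the geometry of $\partial\Omega_\alpha\cap\partial\mathbb{D}$, which maps to $E$.

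Theorem~2.2 supplies the missing density input: Carleson-homogeneity gives the Hayman-Wu property, and by the M\"obius-invariance remark following that theorem, arclength on $\Sigma$ is a Carleson measure on $\mathbb{D}$, so $\mathrm{length}(\Sigma\cap S(I))\lesssim|I|$ for every Carleson box $S(I)$. The plan is then to fix $S(I)$ and decompose
$$\int_{S(I)}d\mu=\sum_\alpha\int_{S(I)\cap\Omega_\alpha}d\mu,$$
estimating each summand by a simply-connected BMOA-type bound on $\Omega_\alpha$ whose constants are uniform in $\alpha$ thanks to Carleson-homogeneity of $E$, and then summing using the Carleson packing of $\Sigma$.

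The main obstacle, and presumably the content of the general theorem announced in the abstract, is the simultaneous uniformity of the local BMOA constants over all $\alpha$ together with the correct handling of the gluing across the interior arcs of $\Sigma$. I expect this to be handled by extending the Beltrami-coefficient approach of the cited Ann.\ Fenn.\ Math.\ paper: encode the obstruction to simple connectivity as a $G$-compatible Beltrami deformation whose coefficient $\nu$ satisfies the hyperbolic Carleson condition that $|\nu(z)|^2(1-|z|^2)^{-1}\,dA$ is a Carleson measure (the Carleson packing of $\Sigma$ being the source of this bound), and then invoke that paper's criterion to transfer the Carleson property from $\nu$ to $d\mu$ itself.
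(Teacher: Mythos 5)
You have correctly isolated the two external inputs (uniform perfectness, hence the Bloch bound, via Pommerenke; and Fernandez--Hamilton giving that arclength on $f^{-1}(\mathbb{R})$ is a Carleson measure), but the decisive step --- passing from local estimates on the pieces $\Omega_\alpha$ to the global Carleson property --- is exactly what you leave open, and the repair you propose does not close it. Concretely, the pre-Schwarzian is the wrong quantity for any ``fundamental domain to disk'' transfer: since $f\circ\gamma=f$ one has $(f''/f')(\gamma z)\,\gamma'(z)=(f''/f')(z)-\gamma''(z)/\gamma'(z)$, so the measure $(1-|z|^2)|f''/f'|^2\,dA$ is not $G$-equivariant, and summing ``uniform local BMOA bounds'' against the Carleson packing of $\Sigma$ does not control it: the local simply-connected estimates on each $\Omega_\alpha$ are expressed in terms of $d(z,\partial\Omega_\alpha)$, which near the interior arcs of $\Sigma$ is much smaller than $1-|z|^2$, and the discrepancy is precisely the contribution of the group elements your decomposition tries to ignore. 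The paper avoids this by working with the Schwarzian: $S_f(\gamma z)\gamma'(z)^2=S_f(z)$, so $(1-|z|^2)^3|S_f|^2\,dA$ has exactly the weight-$|\gamma'|$ equivariance required by property $(H)$ of Section 4; property $(H)$ is equivalent to strong finite length type (Proposition 4.1), which is where Fernandez--Hamilton and Fernandez enter; and one returns from the Schwarzian Carleson condition to $\log f'\in BMOA(\mathbb{D})$ by the Astala--Zinsmeister criterion \cite{AZ}, extended to locally injective $f$ by a normal family argument resting on the Bloch bound.

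Moreover, the ``uniform local BMOA constants thanks to Carleson-homogeneity'' are not free, and your sketch gives no mechanism for them: in the paper this is Lemma 5.1, the statement that the Rubel--Ryff fundamental domain (the hyperbolic convex hull of the compact set $F\subset\partial\mathbb{D}$) is a chord-arc domain, and its proof genuinely uses Pommerenke's trace estimate $\mathrm{Trace}(\gamma)\geq 2+c$ for uniformly perfect domains; combined with the fact that $f(\mathcal{F})$ is an Ahlfors--David regular domain of the form $\mathbb{C}\setminus(\mathbb{R}\setminus(a,b))$, the results of \cite{Z} then give $\log f'\in BMOA(\mathcal{F})$, i.e. $d(z,\partial\mathcal{F})^3|S_f|^2\,dxdy\in CM(\mathcal{F})$, which is the local input that $(H)$ globalizes. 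Finally, the criterion of \cite{H} that you invoke is not available here: it is stated for finitely generated groups of the second kind without parabolic elements, whereas the covering group of a general Carleson-homogeneous Denjoy domain is infinitely generated, and in any case there is no natural $G$-compatible Beltrami coefficient whose associated measure dominates $(1-|z|^2)|f''/f'|^2$; the substitute is the paper's Proposition 4.1 together with Theorem 4.2.
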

In this statement, $ BMOA(\bb D)$ is the subset of the Hardy space $H^2(\bb D)$ whose boundary values belong to the John-Nirenberg space $BMO$ (bounded mean oscillations). Notice that $BMOA(\bb D)$ is a subspace of $\mathcal{B}$ and that the theorem is obvious if $V$ is simply-connected (i.e. if $E$ is an interval).

\section{Fuchsian groups}
Let $G$ be a uniformizing group for a domain $\Omega$: it is a Fuchsian group, meanning that it acts on the unit disk $\mathbb{D}$ of the plane properly discontinuously and freely, and it uniformizes $\Omega$ in the sense that $\Omega$ is conformally equivalent to the Riemann surface $\bb D/G.$ For an element $g$ in $G$, we denote by $\mathcal{H}_{z}(g)$ the closed hyperbolic half-plane containing $z$, bounded by the perpendicular bisector of the hyperbolic segment $[z, g(z)]$. The Dirichlet fundamental domain $\mathcal{F}_{z}(G)$ of $G$ centered at $z$ is the intersection of all the sets $\mathcal{H}_{z}(g)$ with $g$ in $G\backslash\{Id\}$. For simplicity, in this paper we use the notation $\mathcal{F}$ for the Dirichlet fundamental domain $\mathcal{F}_{0}(G)$ of $G$ centered at $z=0.$  It is easy to see that a Dirichlet domain is a convex subset  of $\mathbb{D}$ for the hyperbolic metric. For the Dirichlet fundamental domain $\mathcal{F}$, let $\mathcal{F}^{\circ}$ denote its interior and $\bar{\mathcal{F}}$ its closure. Let $ g$ be a nontrivial element of $G$. When the intersection of $\mathcal{F} $ and $g(\mathcal{F}) $ is non-empty, it is contained in  the perpendicular bisector of the segment $[z, g(z)]_{h}$. This intersection is a point, a non-trivial geodesic segment, a geodesic ray or a geodesic. In the latter three cases, we say this intersection is an edge. The vertices are the endpoints of the edges. An infinity vertex is a vertex  contained in the unit circle $\partial \mathbb{D}.$ When $G$ is of first kind (i.e. if its limit set is the whole unit circle), the  infinity vertex set $\mathcal{F}(\infty) $ is at most countable ( possibly empty), see \cite{B}. 

Let $\lambda(E)$ denote the Hausdorff linear measure of a set $E.$
A Fuchsian group $G$ is said to be of weak-finite length type if  $$\sum_{g\in G} \lambda(g(\partial \mathcal{F}))<+\infty.$$
We also say that a  hyperbolic planar domain $\Omega$  is of finite length type if the universal cover group is of finite length type. Notice that in the case of a Denjoy domain, the quantity $$\sum_{g\in G} \lambda(g(\partial \mathcal{F}))$$ is precisely the length of $f^{-1}(\R)$ for some universal covering $f$. We will say that $G$ is of strong-finite length type if there exists $C>0$ such that the length of $f^{-1}(\R)$ is bounded by $C$ for all universal covering of $\Omega.$   As we will see later this is equivalent to saying that arclength on $\cup_G g(\partial \mathcal{F})$ is a Carleson measure.

It is interesting to connect this notion of finite length type with the exponent of convergence of the Fuchsian group, defined as the infinimum of the $\alpha's$ such that $$\sum\limits_{g\in G}(1-\I g(0)\I)^\alpha<\infty.$$
We denote this number by $\delta(G)$ or $\delta(\Omega)$. Fernandez and Hamilton \cite{FH} have shown that if $\delta(G)<1/2$ then $G$ is of finite length type. Conversely Fernandez has shown that if $\Omega$ is uniformly perfect then $\delta(\Omega)<1$. In particular $G$ is of convergence type, that is $\sum_G(1-\I g(0)\I)<\infty$. In general if $G$ is not co-compact, it is easy to see that $$(1-\I g(0)\I)\leq C  \,\text{length}\,(g(\partial \mathcal{F})),$$ which gives another proof of the preceeding assertion. If $G$ is co-compact then the  two quantities are of the same order, but in this case the group is of divergence type. We do not know if there are convergence-type groups for which these quantities are comparable. Also convergence type does not imply finite length type. An example is given by $\Omega=\bar{\bb C}\backslash K$ where $K$ is the triadic Cantor set : $\Omega$ is then uniformly perfect and thus of convergence type but is not of finite length type, not being Carleson-homogeneous.

\section{Carleson measures}
Recall that a positive measure $\lambda$ defined in a domain $\Omega$ is called a Carleson measure if
$$\parallel\lambda\parallel^{2}=\sup\{\frac{\lambda(\Omega\cap D(z,r)):z\in\partial\Omega}{r}, 0<r<\mathrm{diam}(\partial\Omega)\}<+\infty.$$
In order to motivate what follows, let us start with an example coming from Teichm{\"u}ller theory. If $G$ is a Fuchsian group and $\mu(z)$ a bounded measurable function on $\Delta$ which satisfies $$||\mu(z)||_{\infty}<1 ~~\text {and} ~~\mu(z)=\mu(g(z))\overline{g'(z)}/g'(z)$$ for every $g\in G$, then we say that $\mu$ is a  $G$-compatible Beltrami coefficient (or complex dilatation). We denote by $M(G)$ the set of all $G$-compatible Beltrami coefficients.

We recall that for a Fuchsian group $G$, the unit disk $\mathbb{D}$ can be tessellated  by the images any Dirichlet fundamental domain $\mathcal{F}(G)$ of $G$. Is it possible  to check  that the measure $$\lambda_{\mu}=|\mu|^{2}/(1-|z|^{2})dxdy$$ induced by $\mu\in M(G)$ is in $CM(\mathbb{D})$  directly from its value on the Dirichlet fundamental domain $\mathcal{F}_{z}(G)$ of $G$?

Very recently, the first author in \cite{H} has given a positive answer for the finitely generated groups of the second kind (i.e. not of the first kind) without parabolic elements.

\begin{theo}\cite{H}\label{main1}
Let $G$ be finitely generated Fuchsian of the second kind without parabolic elements  and $\mathcal{F}$ be the Dirichlet  fundamental domain of $G$ centered at $0$. Let $\mu$ be a $G$-compatible Beltrami coefficient in $\in M(G)$. Then the measure $|\mu|^{2}(1-|z|^{2})^{-1}$
is a Carleson measure on the unit disk if and only if the restriction of the measure $|\mu|^{2}(1-|z|^{2})^{-1}$ to a  Dirichlet fundamental domain $\mathcal{F}(\infty)$ is a Carleson measure on $\mathcal{F}.$
\end{theo}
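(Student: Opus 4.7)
The ``only if'' direction is immediate. For the converse, my plan is to propagate the local Carleson estimate on $\mathcal{F}$ to a global one on $\bb D$ by using the tessellation $\bb D=\bigcup_{g\in G}g(\mathcal{F})$ and tracking how $\lambda_\mu:=|\mu|^2(1-|z|^2)^{-1}\,dxdy$ transforms tile by tile. Since $\mu$ is $G$-compatible we have $|\mu\circ g|=|\mu|$, and since $g:\bb D\to\bb D$ is Möbius, $1-|g(z)|^2=|g'(z)|(1-|z|^2)$ and $dA(g(z))=|g'(z)|^2dA(z)$. A direct change of variables then yields the transformation rule
$$\lambda_\mu(B\cap g(\mathcal{F}))=\int_{g^{-1}(B)\cap\mathcal{F}}\frac{|\mu(z)|^2\,|g'(z)|}{1-|z|^2}\,dxdy,$$
so $\lambda_\mu$ is not $G$-invariant but transforms with a Jacobian $|g'|$. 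This ``broken invariance'' is the source of all the trouble.

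Fix a boundary Carleson box $Q=D(\zeta,r)\cap\bb D$ and split $\lambda_\mu(Q)=\sum_{g\in G}\lambda_\mu(Q\cap g(\mathcal{F}))$. For each $g$, Möbius distortion shows that $g^{-1}(Q)$ is contained in a bounded union of Carleson boxes based at $\eta_g:=g^{-1}(\zeta)$ of Euclidean scale $r_g\asymp r/|g'(\eta_g)|$, and $|g'|$ is comparable to $|g'(\eta_g)|$ on that preimage. Applying the hypothesis that $\lambda_\mu|_\mathcal{F}$ is Carleson on $\mathcal{F}$ with constant $C_0$ and using the transformation rule gives a per-tile bound
$$\lambda_\mu(Q\cap g(\mathcal{F}))\le C\,C_0\,|g'(\eta_g)|\cdot r_g\asymp C_0\,r$$
when $g^{-1}(Q)\cap\mathcal{F}$ is essentially a single Carleson box of $\mathcal{F}$. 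When instead $g^{-1}(Q)$ contains all of $\mathcal{F}$ (tiles deep inside $Q$), a Whitney-style refinement comparing $r_g$ with $1-|g(0)|$ extracts an extra decay factor of order $(1-|g(0)|)^{1-s}r^{\,s}$ for suitable $s<1$.

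The main obstacle is summing over the infinitely many tiles that meet $Q$ and accumulate on $\Lambda\cap\overline{Q}$. Here the assumptions that $G$ is finitely generated, of the second kind and without parabolic elements enter essentially: they force $G$ to be convex cocompact, so by the result of Fernandez quoted earlier we have $\delta(G)<1$ and the Poincaré series $\sum_{g\in G}(1-|g(0)|)^{s}$ converges for every $s\in(\delta(G),1)$. My plan is then to organize the contributing $g$'s by dyadic shells in $|g(0)-\zeta|$ and $1-|g(0)|$, bound the count of tiles in each shell by a standard orbit-counting estimate (equivalently, a convergent tail of the Poincaré series), and combine this with the decaying per-tile contribution from the Whitney refinement to sum a geometric series in the shell index. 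The finitely many tiles whose closures meet $\overline{Q}$ only along a free side are handled separately and contribute at most $O(r)$ by direct application of the hypothesis. Summing both contributions yields $\lambda_\mu(Q)\le Cr$ with $C$ depending only on $C_0$, the generators of $G$ and $\delta(G)$, which is the desired Carleson estimate on $\bb D$.
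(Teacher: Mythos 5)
A preliminary remark: the paper does not prove this statement at all, it imports it from \cite{H}, so your argument has to stand on its own. Your skeleton is the natural one and matches the identity the paper records in Section 4 (namely $m|\gamma(\mathcal{F})=\gamma^*(|\gamma'|\,(m|\mathcal{F}))$): tessellate, transform tile by tile, use the Carleson hypothesis on $\mathcal{F}$, then sum using convex cocompactness. But the steps carrying the real weight are either false as stated or only asserted. The distortion claim that $|g'|\asymp|g'(\eta_g)|$ on $g^{-1}(Q)$ and that $g^{-1}(Q)$ sits in boundedly many boxes of scale $r/|g'(\eta_g)|$ fails: a M\"obius map has bounded distortion only on sets of bounded hyperbolic diameter, and a Carleson box does not have one; writing $|g'(z)|=(1-|a|^2)/|1-\bar a z|^2$ with $a=g^{-1}(0)$, the factor $|g'|$ oscillates by roughly $(r/(1-|g(0)|))^2$ over the preimage of a box containing the tile. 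Moreover, your per-tile bound $\lambda_\mu(Q\cap g(\mathcal{F}))\lesssim C_0 r$ for the ``single box'' tiles is useless unless you show that only boundedly many tiles fall in that regime, and you give no such count; producing either that count or a per-tile bound that decays with the tile is exactly the heart of the matter. The correct per-tile estimate comes from $\lambda_\mu(g(\mathcal{F}))=\int_{\mathcal{F}}(1-|a|^2)|1-\bar a z|^{-2}\,d\nu(z)$, $a=g^{-1}(0)$: a dyadic decomposition using the Carleson property of $\nu$ gives a bound of order $C_0(1-|g(0)|)/\mathrm{dist}(a/|a|,\bar{\mathcal{F}})$, and to make this uniform one needs the geometric separation $\mathrm{dist}(\bar{\mathcal{F}},\Lambda)>0$ ($\Lambda$ the limit set), which is where ``finitely generated, second kind, no parabolics'' really enters (all limit points are conical, and conical limit points never lie on the closure of a Dirichlet domain); your proposal never identifies this ingredient, citing convex cocompactness only through $\delta(G)<1$.

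The summation step also has a genuine gap: you treat ``a standard orbit-counting estimate'' and ``a convergent tail of the Poincar\'e series'' as equivalent, but they are not. Global convergence of $\sum_g(1-|g(0)|)^{s}$ only yields $\sum_{g(0)\in D(\zeta,r)}(1-|g(0)|)\lesssim r^{1-s}$, which does not give the linear-in-$r$ Carleson bound. What is needed is a localized, shadow-lemma type count (for convex cocompact groups, the number of orbit points with $1-|g(0)|\asymp 2^{-k}$ inside $D(\zeta,2^{-j})$, $j\le k$, is $\lesssim 2^{(k-j)\delta}$), which together with $\delta(G)<1$ gives $\sum_{g(0)\in D(\zeta,r)}(1-|g(0)|)\lesssim r$; this localized estimate is a substantive input, not a formal consequence of convergence. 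Likewise the ``extra decay factor $(1-|g(0)|)^{1-s}r^{s}$'' from the ``Whitney-style refinement'' is asserted, not derived, and it is not clear it is the right quantity. So the plan is repairable in outline, but as written the two crucial points --- a per-tile bound that decays like $C_0(1-|g(0)|)$ (resting on $\mathrm{dist}(\bar{\mathcal{F}},\Lambda)>0$) and the localized orbit counting --- are missing, and these are precisely where the hypotheses of the theorem are used.
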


 In this theorem, the assumption that the finite generated Fuchsian group $G$ does not contain parabolic elements is essential. A counter- example is given by  a cyclic parabolic group, for instance the covering group of a punctured disk. In this case the fundamental domain in the upper half-plane is a vertical strip and setting $|\mu|$ to be constant in the part of the strip above height $1$  to provide a counterexample, since if $|\mu|$ is constant on a horodisk, then it does not give rise to a Carleson measure. Notice also that in this theorem the associated Riemann surface need not be planar.

Let us generalize slightly the problem. In what preceeds we considered measures of the form
$$ m=|\mu|^{2}(1-|z|^{2})^{-1} dxdy$$
where $\mu\in M(G)$. It is easy to see that in this case
$$m|\gamma(\mathcal{F})=\gamma^*(|\gamma'|(m|\mathcal{F})).$$
Let $\nu$ be a positive and finite measure on $\bar{\mathcal{F}}$; we define the measure $\tilde{\nu}$ on the closed unit disk as
$$\tilde{\nu}=\sum\limits_{\gamma\in G}\gamma^*(|\gamma'|\nu),$$
and we ask the question: for which groups is it true that $\tilde{\nu}$ is a Carleson measure on the closed disk if $\nu$ is a Carleson measure on $\mathcal{F}$?
It is thus natural to investigate the groups $G$ satisfying the following property, where $\mathcal{F}$ is the Dirichlet fundamental domain:
$$(H)\; \;\nu\in CM(\mathcal{F})\Rightarrow \tilde{\nu}\in CM(\mathcal{\bb D}).$$
\begin{prop} The properties $(H)$ and $(SFLT)$ (strongly-finite length type) are equivalent for a Fuchsian group $G$.
\end{prop}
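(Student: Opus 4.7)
The equivalence is shown in two directions, and $(SFLT) \Rightarrow (H)$ is the substantial one.

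For $(H) \Rightarrow (SFLT)$, I would apply property $(H)$ to the test measure $\nu_0$ defined as arclength ($1$-dimensional Hausdorff measure) on $\partial\mathcal{F}$. First one checks that $\nu_0 \in CM(\mathcal{F})$: this reduces to the local bound $\mathrm{length}(\partial\mathcal{F}\cap D(z,r))\lesssim r$, which follows from the hyperbolic convexity of $\mathcal{F}$ together with the fact that its geodesic edges accumulate only toward the limit set on $\partial\mathbb{D}$. The standard change-of-variables formula for arclength, namely $\gamma^*(|\gamma'|\,ds|_C)=ds|_{\gamma(C)}$ for any rectifiable $C$, then yields
$$\tilde{\nu}_0 \;=\; \sum_{\gamma\in G}\gamma^*(|\gamma'|\,ds|_{\partial\mathcal{F}}) \;=\; ds\bigg|_{\bigcup_\gamma \gamma(\partial\mathcal{F})},$$
so $(H)$ gives that arclength on $\bigcup_\gamma\gamma(\partial\mathcal{F})$ is a Carleson measure on $\mathbb{D}$, which is precisely the reformulation of $(SFLT)$ stated in the previous section.

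For the converse $(SFLT) \Rightarrow (H)$, let $\nu\in CM(\mathcal{F})$ with constant $A$, and write $\sigma$ for arclength on $\mathcal{E}:=\bigcup_\gamma\gamma(\partial\mathcal{F})$, which by $(SFLT)$ is Carleson on $\mathbb{D}$ with some constant $B$. For a Carleson box $Q=D(\xi,r)\cap\mathbb{D}$ ($\xi\in\partial\mathbb{D}$), I would estimate
$$\tilde{\nu}(Q)\;=\;\sum_{\gamma\in G}\int_{\gamma^{-1}(Q)\cap\bar{\mathcal{F}}}|\gamma'(z)|\,d\nu(z)$$
by partitioning the tiles $\gamma(\bar{\mathcal{F}})$ meeting $Q$ into \emph{large} ones (not contained in a fixed dilation $\tilde{Q}$ of $Q$) and \emph{small} ones ($\gamma(\bar{\mathcal{F}})\subset\tilde Q$). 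Each large tile contributes at least a constant multiple of $r$ to $\sigma(\tilde Q)$, so there are only $O(B)$ of them; their combined contribution to $\tilde{\nu}(Q)$ is $\lesssim Ar$ by a direct estimate using $\nu(\bar{\mathcal{F}})\leq A\,\mathrm{diam}(\mathcal{F})$ together with a uniform bound on $|\gamma'|$ over $\gamma^{-1}(Q)$.

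For the small tiles the central ingredient is the tile-level estimate
$$\int_{\bar{\mathcal{F}}}|\gamma'(z)|\,d\nu(z)\;\leq\; CA\cdot\mathrm{length}(\gamma(\partial\mathcal{F})),$$
which I would establish via a Whitney-type decomposition of $\mathcal{F}$ with respect to $\partial\mathcal{F}$: on each Whitney cube $W$, Koebe distortion makes $|\gamma'|$ essentially constant, and the Carleson hypothesis $\nu(W)\leq A\,\ell(W)$ converts, after applying $\gamma$, into an arclength comparison on $\gamma(W)$. Summing the small-tile contributions and invoking the Carleson condition on $\sigma$ within $\tilde Q$ yields $\lesssim AB\,r$. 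Combining both cases gives $\tilde{\nu}(Q)\lesssim (A+AB)\,r$, so $\tilde{\nu}\in CM(\mathbb{D})$ as required. The principal technical obstacle is precisely this uniform tile-level inequality, where the blow-up of $|\gamma'|$ near $\partial\mathbb{D}$ must be balanced against the Carleson condition on $\nu$; this is the essential geometric content of the argument.
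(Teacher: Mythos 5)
Your first direction is fine and is exactly the paper's: test $(H)$ against arclength on $\partial\mathcal{F}$. The problem lies in the converse, and precisely at the point you yourself identify as the crux, namely the tile-level inequality $\int_{\bar{\mathcal{F}}}|\gamma'|\,d\nu\leq CA\,\mathrm{length}(\gamma(\partial\mathcal{F}))$. The statement itself is the right one (it is also what the paper implicitly uses, applied to $S=T\circ\gamma$), but your proposed proof of it does not work. If you use the Carleson hypothesis only cube-by-cube, i.e. $\nu(W)\leq A\,\ell(W)$ on each Whitney cube $W$ of $\mathcal{F}$, and then use the (correct) facts that $|\gamma'|$ is essentially constant on $W$ and that the images $\gamma(W)$ form a Whitney-type family for $\gamma(\mathcal{F})$, you are reduced to bounding $\sum_W \mathrm{diam}(\gamma(W))$ by $\mathrm{length}(\gamma(\partial\mathcal{F}))$ --- and this sum is not finite: already for $\mathcal{F}=\mathbb{D}$ and $\gamma=\mathrm{Id}$ one has $\sum_W\mathrm{diam}(W)=\sum_n(\#\{W\ \mathrm{at\ scale\ }2^{-n}\})\,2^{-n}\approx\sum_n 1=\infty$, while the boundary has length $2\pi$. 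In general the sum is comparable to $\iint_{\gamma(\mathcal{F})}\mathrm{dist}(w,\gamma(\partial\mathcal{F}))^{-1}dA(w)$, which diverges logarithmically for any rectifiable boundary. The Carleson condition must therefore be used at all scales simultaneously, not through the single-cube bound: for a M\"obius map $S$ one decomposes $\mathcal{F}$ into dyadic annuli $\{|z-\zeta|\approx 2^k(1-|a|)\}$ centered at the point $\zeta\in\partial\mathbb{D}$ where $|S'|$ peaks, uses $\nu(D(z,2^k(1-|a|))\cap\mathcal{F})\leq CA\,2^k(1-|a|)$ (here one needs the remark that $\mathrm{dist}(z,\partial\mathcal{F})\leq 1-|z|\leq|z-\zeta|$ to recenter on $\partial\mathcal{F}$) against the decay $|S'|\approx(1-|a|)\max(|z-\zeta|,1-|a|)^{-2}$, and then exploits the connectedness of $\partial\mathcal{F}$ (it is the boundary of a hyperbolically convex set) to show that $\partial\mathcal{F}$ crosses the same annuli, giving the matching lower bound $\mathrm{length}(S(\partial\mathcal{F}))\geq c\sum_k 2^{-k}\inf_{A_k}|S'|\cdot 2^k(1-|a|)$. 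Without an argument of this type your central lemma is unproved, and with it your large/small-tile bookkeeping becomes unnecessary.

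Two further remarks. Your large-tile estimate is also not justified as stated: for a tile meeting $Q$ but not contained in $\tilde Q$, $|\gamma'|$ is nowhere near constant on $\gamma^{-1}(Q)$, so ``a uniform bound on $|\gamma'|$ over $\gamma^{-1}(Q)$'' times $\nu(\bar{\mathcal{F}})$ does not give $\lesssim Ar$; a localized version of the same key inequality is needed there too. Finally, note that the paper's outer structure is simpler than your box decomposition: it uses the M\"obius-invariant characterization of Carleson measures, writing $\int|T'|\,d\tilde{\nu}=\sum_{\gamma\in G}\int_{\mathcal{F}}|(T\circ\gamma)'|\,d\nu$ for an arbitrary automorphism $T$ of $\mathbb{D}$, bounds each summand by $C\,\mathrm{length}(T\circ\gamma(\partial\mathcal{F}))$ via the key inequality, and then concludes uniformly in $T$ from $(SFLT)$; once the key inequality is in hand this disposes of the large/small dichotomy altogether.
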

Proof: In order to prove that $(H)\Rightarrow (SFLT)$ it suffices to apply $(H)$ to $$\nu=\text{Arclength}~~ (\partial \mathcal{F}).$$

Suppose now that $(SFLT)$ holds and let $\nu\in CM(\mathcal{F})$. Let $T$ be any automorphism of the disk : we may write
\begin{align*}
\int |T'|d\tilde{\nu}&=\sum\limits_{\gamma\in G}\int_{\gamma(\mathcal{F})}|T'|d\gamma^*(|\gamma'|\nu)\\
&=\sum\limits_{\gamma\in G}\int_{\mathcal{F}}|(T\circ\gamma)'|d\nu
\end{align*}
and this last quantity is bounded by a constant time the length of $T(\cup_G\gamma(\partial\mathcal{F}))$ which is finite (uniformly in $T$) by $(SFLT)$: this proves that $\tilde{\nu}\in CM(\bb D)$ and thus $(H)$.

We finish this section with an application of the last proposition: let $\Omega$ be a hyperbolic planar domain which is uniformly perfect, $G$ an uniformizing Fuchsian group and $f:\,\bb D\to \Omega$ a universal covering of $\Omega$. Gonzalez  has shown that under these hypothesis there exists a fundamental domain $\mathcal{F}$ for $G$ which is a chord-arc domain. Then $f$ is a conformal mapping onto a simply connected domain $U\subset\Omega$ which may be seen as a fundamental domain on the Riemann surface $\Omega$.
\begin{theo} If $G$ has property $(H)$ for $\mathcal{F}$ and if $U$ is Ahlfors-David regular then $\log f'\in BMOA(\bb D)$.
\end{theo}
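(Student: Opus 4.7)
My plan is to verify the standard Garsia-type characterization of $BMOA(\bb D)$: setting $\phi := \log f'$, it is equivalent to show that $d\mu := |\phi'(z)|^2(1-|z|^2)\,dxdy$ is a Carleson measure on $\bb D$, which by the M\"obius-test formulation means $\sup_T \int_{\bb D}|T'(z)|\,d\mu(z) < \infty$ over all M\"obius self-maps $T$ of $\bb D$. Since $\bb D$ is tessellated by $\{\gamma(\mathcal{F})\}_{\gamma\in G}$ and since $f\circ\gamma = f$ yields the cocycle identity $\phi\circ\gamma = \phi - \log\gamma'$, a change of variables $z = \gamma(w)$ together with $(1-|\gamma(w)|^2) = |\gamma'(w)|(1-|w|^2)$ transforms the test integral into
$$\int_{\bb D}|T'|\,d\mu \;=\; \sum_{\gamma\in G}\int_{\mathcal{F}}|(T\circ\gamma)'(w)|\,\bigl|\phi'(w)-\psi_\gamma(w)\bigr|^2\,(1-|w|^2)\,dxdy,$$
with $\psi_\gamma := \gamma''/\gamma'$.

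The triangle inequality $|\phi'-\psi_\gamma|^2 \leq 2|\phi'|^2 + 2|\psi_\gamma|^2$ splits the sum into a principal piece $A(T) := \sum_\gamma \int_{\mathcal{F}}|(T\circ\gamma)'||\phi'|^2(1-|w|^2)\,dxdy$ and a residual piece $B(T)$ obtained by replacing $|\phi'|^2$ with $|\psi_\gamma|^2$. The principal piece matches exactly $A(T) = \int_{\bb D}|T'|\,d\tilde\nu$ for the measure $\nu := |\phi'(w)|^2(1-|w|^2)\,dxdy$ restricted to $\bar{\mathcal{F}}$. Hence property $(H)$ reduces the uniform boundedness of $A(T)$ to verifying $\nu \in CM(\mathcal{F})$.

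To establish $\nu \in CM(\mathcal{F})$, I would introduce the Riemann map $h : \bb D \to \mathcal{F}$; since $\mathcal{F}$ is chord-arc (Gonzalez), $\log h' \in BMOA(\bb D)$, and the composition $g := f|_\mathcal{F}\circ h : \bb D \to U$ is a Riemann map onto the Ahlfors-David regular domain $U$. The Ahlfors-David regularity of $\partial U$ produces a Carleson measure estimate for $|g''/g'|^2(1-|w|^2)\,dxdy$ on $\bb D$ through the standard identification of $\log|g'| \in BMO(\partial\bb D)$ with a harmonic-BMO condition. Writing $g''/g' = (\phi'\circ h)\,h' + h''/h'$ and subtracting off the BMOA contribution from $\log h'$ transfers this bound to $|\phi'(h(w))|^2|h'(w)|^2(1-|w|^2)\,dxdy$. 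Pushing forward by $h$, using Koebe distortion $1-|h^{-1}(z)|^2 \asymp \mathrm{dist}(z,\partial\mathcal{F})$ together with the chord-arc comparison between $\mathrm{dist}(\cdot,\partial\mathcal{F})$ and $1-|\cdot|^2$ near $\partial\mathcal{F}\cap\partial\bb D$, yields the Carleson condition for $\nu$; the interior edges of $\partial\mathcal{F}$ contribute nothing problematic since $\phi$ is locally smooth there, so $\nu(D(z_0,r)\cap\mathcal{F}) \lesssim r^2$ near such $z_0$.

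The main obstacle is the residual piece $B(T) = \sum_\gamma \int_{\mathcal{F}}|(T\circ\gamma)'||\psi_\gamma|^2(1-|w|^2)\,dxdy$, which records the failure of $\phi'$ to be $G$-invariant. A direct change of variables rewrites it as $B(T) = \int_{\bb D}|T'(z)|\,\Xi(z)(1-|z|^2)\,dxdy$, where $\Xi(z) := |\psi_{\gamma_z^{-1}}(z)|^2$ on the tile $\gamma_z(\mathcal{F})$ containing $z$. Using the M\"obius inequality $|1-\bar a w|^{2} \geq (1-|a|^2)(1-|w|^2)$ (with $a = \gamma^{-1}(0)$) to estimate $|\psi_\gamma|^2$ pointwise, combined with the strong-finite-length-type property, equivalent to $(H)$ by Proposition~4.1, the plan is to dominate $|\psi_\gamma|^2(1-|w|^2)\,dxdy|_{\mathcal{F}}$ by a bounded multiple of the arclength measure on $\partial\mathcal{F}$ (a Carleson measure on $\mathcal{F}$ by the chord-arc property) and then invoke $(H)$ a second time to spread this to $\bb D$. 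The delicate point is that the "pole" of $\psi_\gamma$ shifts with $\gamma$, so the $G$-orbit must be handled via the $(SFLT)$ geometry and the Poincar\'e-series convergence afforded by uniform perfectness ($\delta(G)<1$); carrying out this bookkeeping without losing the Carleson bound is the hardest step of the proof.
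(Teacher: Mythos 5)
Your reduction and change of variables are correct, and your treatment of the principal piece $A(T)$ runs parallel to the paper's first step (Riemann map onto the chord-arc $\mathcal F$, $\log h'\in BMOA$ by \cite{Z}, Ahlfors--David regularity of $U$ giving $\log g'\in BMOA$, hence a Carleson bound on $\mathcal F$). But the proposal has a genuine gap exactly where you flag it: the residual piece $B(T)$. Property $(H)$ cannot simply be ``invoked a second time'' there, because $(H)$ concerns one fixed measure $\nu$ on $\bar{\mathcal F}$ spread over the group with the weight $|\gamma'|$, whereas in $B(T)$ the density changes with $\gamma$: writing $a_\gamma=\gamma^{-1}(0)$ one has $|\psi_\gamma(w)|^2=4|a_\gamma|^2/|1-\bar a_\gamma w|^2$, whose ``pole'' $1/\bar a_\gamma$ accumulates on the limit set as $\gamma$ runs through $G$. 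You would need a domination of $|\psi_\gamma|^2(1-|w|^2)\,dxdy$ on $\mathcal F$ by a fixed Carleson measure \emph{uniformly in} $\gamma$, compatible with the weights $|(T\circ\gamma)'|$, and no such estimate is given; the proposed comparison with arclength on $\partial\mathcal F$ is asserted, not proved, and it is not clear it can hold in the required uniform, summable form. So the argument is incomplete at its decisive step.

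The paper avoids this difficulty altogether by replacing the pre-Schwarzian with the Schwarzian derivative: since $f\circ\gamma=f$, $S_f$ is an exactly $G$-invariant quadratic differential, so the measure $(1-|z|^2)^3|S_f(z)|^2dxdy$ on $\bb D$ is \emph{literally} of the form $\tilde\nu$ with $\nu$ its restriction to $\mathcal F$, and $(H)$ applies with no cocycle error term. The return trip from ``$(1-|z|^2)^3|S_f|^2\in CM(\bb D)$'' to $\log f'\in BMOA(\bb D)$ is then done via the Astala--Zinsmeister argument \cite{AZ}, whose Lemma~2 extends from conformal maps to universal coverings because $\Vert\log f'\Vert_{\mathcal B}$ is bounded (Pommerenke, uniform perfectness), making the relevant family normal. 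If you wish to salvage your route, controlling $B(T)$ amounts to proving a group-equivariant version of that lemma by hand. A secondary gap in your principal piece: when trading $\mathrm{dist}(z,\partial\mathcal F)$ for $1-|z|^2$, ``local smoothness of $\phi$ near the interior edges'' does not cover neighborhoods of the infinite vertices, where $\mathrm{dist}(z,\partial\mathcal F)\ll 1-|z|^2$ while $|\phi'|$ may be as large as $(1-|z|^2)^{-1}$; there you must use the Bloch bound coming from uniform perfectness (this is precisely where the paper invokes Pommerenke), not smoothness alone.
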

\begin{proof} Let $\varphi$ be a Riemann map from the disk onto $\mathcal{F}$. By \cite{Z}, $\log(f\circ\varphi)'\in BMOA(\bb D)$ as well as $\log\varphi'$. It follows that $\log f'\circ\varphi\in BMOA(\bb D)$. By definition this means that $\log f'\in BMOA(\mathcal{F})$. By results in \cite{Z} it implies that $$d(z,\partial \mathcal{F})^3|S_f(z)|^2dxdy\in CM(\mathcal{F}).$$

At this point we use the already mentioned result of Pommerenke that $\log f'\in\mathcal{B}$: it implies that $(1-|z|^2)^3|S_f(z)|^2dxdy\in CM(\mathcal{F})$. Using now property $(H)$ an elementary computation shows that it implies that $$(1-|z|^2)^3|S_f(z)|^2dxdy\in CM(\mathcal{\bb D}.$$
 It remains to show that this last property implies that $\log f'\in BMOA(\bb D)$; this has been proven in  \cite{AZ} in the case $f$ conformal (that is when $G=\{Id\}$). But, thanks to Pommerenke's result, the proof of Lemma 2 in \cite{AZ} goes true for universal coverings, the set
 $$\{g:\,\bb D\to \bb C\,\mathrm{holomorphic\, and\, locally \,injective}: \Vert \log g'\Vert_\mathcal{B}\leq M\}$$being for every $M>0$ a normal family.

\section{Denjoy Domains}
We are now in position to prove our main theorem, whose statement we recall:
\begin{theo} Let $\Omega$ be a Carleson-homogeneous Denjoy domain and $f$ one of its universal coverings, then $\log f'\in BMOA(\bb D).$
\end{theo}
Proof: Let us thus consider a Carleson-homogeneous domain $\Omega$ and let us consider a Fuchsian group $G$ as constructed by Rubel and Ryff that uniformizes $\Omega$.

\begin{lem} Let $\mathcal{F}$ be the fundamental domain for $G$ constructed by Rubel and Ryff. Then $\mathcal{F}$ is a chord-arc domain.
\end{lem}

Notice that this result is close to Gonzalez's one. except that we specify the fundamental domain.  Notice also that the lemma actually holds for more general uniformly perfect Denjoy domains.

\begin{proof} We start by recalling Rubel and Ryff's construction, see \cite{RR}. Let $F$ be a compact subset of the unit circle supposed to be symmetric with respect to $\R$: its complement in the circle is a countable union of intervals $I_j$. Our fundamental domain $\mathcal{F}$ will be the hyperbolic convex hull of $F$ which consists in the domain obtained by replacing each $I_j$ by the hyperbolic geodesic $L_i$ with the same endpoints. Now let $f$ be a Riemann map from $\mathcal{F}\cap \{y>0\}$ onto the half-plane $\{y>0\}$ fixing $-1$ and $1$. We extend $f$ by Schwarz reflection to $\mathcal{F}$ as an isomorphism from $\mathcal{F}$ onto $\mathbb C\backslash\R\cup (-1,1)$. We finally extend $f$ to all of the unit disk by successive Schwarz reflections using the group $G$ generated by the $S\circ R_j$ with $S(z)=\bar z$ and $R_j$ being the reflection across $L_j$. This extended function is then the universal covering of some Denjoy domain and Rubel and Rieff have shown that for every $K\subset \bar{\R}$ a universal covering of $\Omega=\mathbb{C}$ maybe obtained by this method with some $F$ as above.

For the rest of the proof of the lemma we will only use the fact that $\Omega$ is uniformly perfect: Pommerenke \cite{P1} has proven that for such domains there exists $c>0$ such that for every $\gamma$ belonging to a uniformizing Fuchsian group, we have $$\text{Trace}(\gamma)\geq 2+c.$$

Now the boundary of $\mathcal{F}$ is the graph of a function in polar coordinates. So we must show that for any interval $I$ of the unit circle, the length of the part of the graph lying over $I$ is controlled by the distance of the two endpoints of this part of the graph.

Suppose first that the two end points of $I$ lie in $F$: then obviously the length of the graph above $I$ is less that $\pi |I|$ and $I$ is also in this case the distance between the endpoints. Suppose now that one of the endpoints of $I$ is in $E$ while the other is in some $I_j$. Let $C_j$ the part of the graph lying over $I_j\cap I$: if $$\text{length}\, (C_j)\leq 10 |I|,$$ we are essentially back to the proceeding case. If  $$\text{length}\, (C_j)\geq 10 |I|,$$ then the total length over $I$ is controlled by length$(C_j)$ and we are also done.

Remains the most interesting case, i.e. when both endpoints of $I$ lie in say $I_j,I_k$ with $k\neq j$ (the case $k=j$ is obvious). For simplicity of the computations we work in the upper-half plane instead of the disk. We denote by $r_j,r_k$ the length of the intervals $I_j,I_k$ and by $\varepsilon$ the distance between the two intervals. A lengthy but elementary computation shows that, if $R_j,R_k$ denote the reflections through $L_j,L_k$ then
$$\mathrm{Trace}\,(R_jR_k)=2+\frac{2\varepsilon}{\frac{1}{r_j}+\frac{1}{r_k}}.$$
Suppose without loss of generality that $r_j\leq r_k$. Then by Pommerenke's result
$$ |I|\geq \varepsilon\geq \frac{cr_j}{2}$$
and we are back to a preceeding case. This complete the proof of the lemma.
\end{proof}

With this lemma, the proof of Theorem 5.1 is now an immediate corollary of Theorem 4.2 and of Fernandez's $($\cite{F}$)$ result. Indeed, the image $U$ of $\mathcal{F}$ by $f$ is of the form $\bb C\backslash \R\cup (a,b)$ for some $a<b\in \R$ which is obviously an Ahlfors-David regular domain and Fernandez proved that $G$ has $(SFLT)\Leftrightarrow (H)$ by proposition 4.1.
\end{proof}

We believe that the reciprocal of the last theorem is true, i.e. if $\log{f'}\in BMOA(\mathbb{D})$ for all universal coverings of a Denjoy domain then this domain is Carleson-homogeneous: one possible way to prove that on each circular arc the map $f$ "behaves" like the corresponding Joukovsky map $f_{0}$, thus allowing to get a lower bound of the form
$$\iint_{\mathcal{F}}(1-|z|^{2})^{3}|\gamma'(z)|dxdy\geq c\text{diam}(\gamma(\partial \mathcal{F}))$$
uniformly on $\gamma\in G.$ But despite many efforts we could not make this concrete.

\end{document}